\documentclass{amsart}
\usepackage{amsmath, amssymb,bm} % Essenziali per la matematica avanzata
\usepackage{xcolor}
\usepackage[verbose]{hyperref}
\hypersetup{colorlinks=false,allbordercolors=blue,pdfborderstyle={/S/U/W 1}}

\theoremstyle{thmstyleone}%
\newtheorem*{theorem*}{Theorem}%  meant for continuous numbers
\newtheorem{theorem}{Theorem}[section]%  meant for continuous numbers
%%\newtheorem{theorem}{Theorem}[section]% meant for sectionwise numbers
%% optional argument [theorem] produces theorem numbering sequence instead of independent numbers for Proposition
\newtheorem{proposition}[theorem]{Proposition}% 
\newtheorem{lemma}[theorem]{Lemma}% 

\theoremstyle{thmstyletwo}%
\newtheorem{example}{Example}[section]%
\newtheorem{remark}{Remark}[section]%

\theoremstyle{thmstylethree}%
\newtheorem{definition}{Definition}[section]%

\title{On the validity of the Radon-Nikodym Theorem}

\author{Paolo Roselli}
\address{Dipartimento di Matematica,\\
Universit\`a degli Studi di Roma Tor Vergata,\\ 
Via della Ricerca Scientifica\\
Roma, 00133, Italy}
\email{roselli@mat.uniroma2.it}

\author{Michel Willem}
\address{Institut de Recherche en Math\'ematique et Physique,\\
Universit\'e Catholique de Louvain,\\
Chemin du Cyclotron, 2\\
Louvain la Neuve, 1348, Belgium}
\email{michel.willem@uclouvain.be}

\begin{document}

\maketitle

%\begin{history}
%\received{(Day Month Year)}
%\revised{(Day Month Year)}
%\accepted{(Day Month Year)}
%\published{(Day Month Year)}
%\end{history}

\begin{abstract}
This paper presents a new general formulation of the Radon-Nikodym theorem in the setting of abstract measure theory. 
We introduce the notion of weak localizability for a measure and show that this property is both necessary and sufficient for the validity of a Radon-Nikodym-type representation under a natural compatibility relation between measures. 
The proof relies solely on elementary tools, such as Markov's inequality and the monotone convergence theorem. In addition to establishing the main result, we provide a constructive approach to envelope functions for families of non-negative measurable functions supported on sets of finite measure.\end{abstract}

%\keywords{Radon-Nikodym; measure; localizable measure; weakly localizable measure.}

%\ccode{Mathematics Subject Classification 2020: 28A15, 28A12, 46G10, 46B22}

\begin{center}
    \textit{To the memory of Haïm Brezis, a master of the calculus of variations.}
\end{center}

\section{Introduction}\label{Intro}	

We use a variational approach,  a favorite tool of Haïm Brezis, in order to prove a necessary and sufficient condition for the validity of the Radon-Nikodym theorem.
The Radon-Nikodym theorem states that if a measure $\mu$ on a measurable space~$(\Omega, \bm{\Sigma})$ satisfies a certain property, then every measure $\nu$ that stands in a certain relation to $\mu$ can be expressed as an integral (with respect to $\mu$) of a measurable function $g = g_{\mu,\nu}$, called the density:
\begin{equation}\label{eq:RN}
\nu(A) = \int_A g \, d\mu
\end{equation}
In classical versions of the Radon-Nikodym theorem, the property required of $\mu$ is either finiteness or $\sigma$-finiteness, and the relation between $\mu$ and $\nu$ is absolute continuity (see~\cite{Halmos1974}). Investigating the conditions under which~(\ref{eq:RN}) holds has led to the discovery of new properties of the measure $\mu$ that allow for more general formulations of the Radon-Nikodym theorem.
In particular, the notion of \textit{localizable measure} was defined by Segal in~\cite{Segal1951} (see Section~\ref{sec: weak loc meas}). The measures considered by Segal are \textit{locally finite} (see~\cite{BrownPearcy1977}), i.e. \textit{measures with the finite subset property} (see~\cite{Rao2004}), or \textit{semi-finite measures} (see~\cite{DePauw2024}).
This means that for every $A\in\bm{\Sigma}$ such that $\mu(A)>0$, there exists $B\in\bm{\Sigma}$ such that $B\subseteq A$ and $0<\mu(B)<+\infty$. Any $\sigma$-finite measure is both localizable and locally finite. When $\mu$ is locally finite, the Radon-Nikodym theorem holds under natural assumptions if and only if $\mu$ is localizable \big(see Problem T(ii) at p.190 in~\cite{BrownPearcy1977},  Theorem 5 at p.325 in~\cite{Rao2004} or  Theorem~5.1 at p.301 in~\cite{Segal1951}\big).
In Section~\ref{sec: weak loc meas} we define the notion of \textit{weakly localizable measure} and we prove, in Section~\ref{sec: global theory}, that the Radon-Nikodym theorem holds (under natural assumptions on $\nu$) if and only if $\mu$ is weakly localizable. 
Of course, when $\mu$ is locally finite (i.e., semi-finite), localizability is equivalent to weak localizability. 
Our main motivation was the recent characterization by De Pauw in~\cite{DePauw2024} of the measures $\mu$ (not necessarily semi-finite) such that the canonical map 
$\bm{Y} : L^\infty(\Omega,\bm{\Sigma},\mu) \rightarrow \left[L^1(\Omega,\bm{\Sigma},\mu)\right]^*$
is surjective 
\big(let us recall that 
$\left\langle \bm{Y}\kern-3pt{\scriptstyle(f)},u\right\rangle=\int_\Omega f u \ d\mu$\big). 

\subsection{Brief summary of this work}
In Section~\ref{sec: weak loc meas}, after introducing some notations, we define the notion of weakly localizable measure and prove, for such measures, the existence of an ``upper envelope'' for any family of non-negative measurable functions that are strictly positive only on sets of finite measure.\\
In Section~\ref{sec: comp meas}, we introduce the compatibility relation between two measures defined on the same measurable space and show that this relation is necessary for the validity of~(\ref{eq:RN}).\\
%, and also sufficient for a particular representation of $\nu$ using only $\mu$-finite sets.
In Section~\ref{sec: local theory}, we use the variational method \big(see~\cite{Halmos1974}, p.129\big) in order to prove a ``local'' Radon-Nikodym theorem.
% extend Schep's technique to our context. 
In Section~\ref{sec: global theory}, we prove the main result.

\section{Weakly localizable measures}\label{sec: weak loc meas}

Let $\Omega $ be a set. 
Let $\bm{\Sigma}$ be a $\sigma$-algebra of subsets of $\Omega $. 
The couple $(\Omega,\bm{\Sigma})$ will be called a \textit{measurable space}.
A \textit{measure} on the measurable space $(\Omega,\bm{\Sigma})$ is a function 
$\mu : \bm{\Sigma}\rightarrow [0,+\infty]$ such
that
\begin{itemize}
	\item $\mu(\emptyset)=0$,
	\item $\mu$ is $\sigma$-additive 
	\big(i.e. if $M_1$, ...,$M_n$,... is a sequence of mutually disjoint sets in $\bm{\Sigma}$, then 
$\mu\left(\bigsqcup_n M_n\right) = \sum_{n=1}^\infty \mu(M_n)$\big).
\end{itemize}
The symbol $\sqcup$ may replace the usual symbol $\cup$ when the union concerns mutually disjoint sets.
The triple $(\Omega ,\bm{\Sigma}, \mu )$ will be called a \textit{measure space} or, briefly, a \textit{measure}.\\
Given a measure space~$(\Omega,\bm{\Sigma},\mu)$, we define 
\begin{itemize}
	\item $\mathcal{N}_\mu \doteq \big\{A \in \bm{\Sigma} : \mu (A)=0\big\}$
	\item $\mathcal{F}_\mu \doteq \big\{A \in \bm{\Sigma} : \mu (A) < +\infty\big\}$
\end{itemize}

\noindent
Let us recall the definitions of \textit{$\mu$-supremum} and of \textit{localizable measure} due to Segal~(see~\cite{Segal1951},~\cite{OkadaRicker2019}). Let us also introduce the notion of \textit{weakly localizable measure}.

\begin{definition}\label{def: mu-sup}
Given a measure space~$(\Omega,\bm{\Sigma},\mu)$,
the set $A \in \bm{\Sigma}$ is a \textit{$\mu$-supremum} of the family $\mathcal{E}\subseteq \bm{\Sigma}$ if
\begin{enumerate}
	\item[(a)] for every $E\in\mathcal{E}$, $E\setminus A \in \mathcal{N}_\mu$;
	\item[(b)]  if $B\in\bm{\Sigma}$ is such that for every $E\in\mathcal{E}$, $B\cap E \in \mathcal{N}_\mu$, then $B\cap A \in \mathcal{N}_\mu$.
\end{enumerate}

\noindent
The measure $(\Omega ,\bm{\Sigma}, \mu )$ is \textit{localizable} if every family $\mathcal{E}\subseteq \bm{\Sigma}$	has a $\mu$-supremum.

\noindent
The measure $(\Omega ,\bm{\Sigma}, \mu )$ is \textit{weakly localizable} if every family $\mathcal{E}\subseteq \mathcal{F}_\mu$	has $\mu$-supremum.
\end{definition}

It is clear that any localizable measure is weakly localizable

\begin{example}
We briefly recall that the cardinality $\#$ of any subset of a set $\Omega$ provides a measure $\mu$ in the following sense: $\mu(M)=\#M$, if the $M$ has a finite number of elements, $\mu(M)=+\infty$, otherwise. 
So, $\mathcal{N}_\#=\{\emptyset\}$, and the $\mu$-supremum, if it exists, is unique and coincides with the set-theoretic union of the family considered. Let us consider a measure which is weakly localizable, but not localizable. It suffices to take as $\Omega$ the set~$\mathbb{R}$ of real numbers; then, define $M\in\bm{\Sigma}$, if $M$ is countable or $\mathbb{R}\setminus M$ is countable. Finally, we define $\mu(M)=\#M$ if $M\subseteq \mathbb{Q}$ (the set of rational numbers), $\mu(M)=+\infty$ otherwise.
So, elements of $\mathcal{F}_\mu$ are just finite sets of rational numbers.
Then $(\mathbb{R},\bm{\Sigma},\mu)$ is weakly localizable because the $\mu$-supremum of any $\mathcal{E}\subseteq\mathcal{F}_\mu$ is a countable set (of rational numbers) which is in $\bm{\Sigma}$. On the contrary, if we allow $\mathcal{E}\subseteq \bm{\Sigma}$, then we can choose the family $\mathcal{E}=\big\{\{x\} :  x\in[0,+\infty)\big\}\subset \bm{\Sigma}$, and its only possible $\mu$-supremum, the interval $[0,+\infty)$, is not in $\bm{\Sigma}$; so, $(\mathbb{R},\bm{\Sigma},\mu)$ is not localizable.  
\end{example}

\begin{example}
The foregoing example, suggest also the example of a measure space which is not weakly localizable. One can  consider~$\Omega$ and $\bm{\Sigma}$ as in the foregoing example but, as measure, the counting measure $\#$.
This time the elements of $\mathcal{F}_\#$ are all finite sets of~$\mathbb{R}$. 
So, the family $\mathcal{E}=\big\{\{x\} :  x\in[0,+\infty)\big\}\subseteq \mathcal{F}_\#$; however, its only possible $\#$-supremum, the interval $[0,+\infty)$, is not in $\bm{\Sigma}$, as before. 
Nevertheless, if we choose as $\bm{\Sigma}=2^{\Omega}$, the family of all subsets of $\Omega$, then the counting measure $\#$ becomes localizable. This should warn the reader that the notion of (weak) localizable measure space depends on the $\sigma$-algebra considered.
\end{example}

\begin{definition}
Let us recall that, in a measurable space $(\Omega,\bm{\Sigma})$, a  function 
$f : \Omega \rightarrow [-\infty,\infty]$, is
said to be \textit{measurable} if 
$\{f>t\}=\{x \in \Omega : f(x) >t \} \in \bm{\Sigma}$ 
for every $t\in\mathbb{R}$.
\end{definition}

\begin{proposition}\label{prop: upper envelope}%Proposition 1.2
Let $(\Omega , \bm{\Sigma}, \mu )$ be a weakly localizable measure.\\ 
Let $\mathcal{H}$ be a family of measurable functions 
$h:\Omega\rightarrow [0,+\infty]$  such that $\{h>0\}\in\mathcal{F}_\mu$.\\ 
Then, there exists a measurable function
$g_\mathcal{H}:\Omega \rightarrow [0,+\infty]$ such that
\begin{enumerate}
	\item[(a)] $\forall h\in\mathcal{H} \ \{h > g_\mathcal{H}\} \in \mathcal{N}_\mu$ 
	\item[(b)] if $F\in\bm{\Sigma}$ and 
$u:\Omega\rightarrow [0,{\infty}]$ 
is a non-negative measurable function such that 
$\forall h \in \mathcal{H}$\ \  $F\cap \{h > u\}\in \mathcal{N}_\mu$,
then   
$F\cap \{g_\mathcal{H} > u\}\in \mathcal{N}_\mu$.
\end{enumerate}
\end{proposition}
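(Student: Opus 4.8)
The plan is to realise $g_\mathcal{H}$ as an essential supremum of the family $\mathcal{H}$, reconstructed level by level from the weak localizability hypothesis. For each positive rational $t$ I would consider the family $\mathcal{E}_t\doteq\big\{\{h>t\} : h\in\mathcal{H}\big\}$. Since $\{h>t\}\subseteq\{h>0\}\in\mathcal{F}_\mu$ and $\mathcal{F}_\mu$ is closed under measurable subsets (by monotonicity of $\mu$), we have $\mathcal{E}_t\subseteq\mathcal{F}_\mu$, so by weak localizability each $\mathcal{E}_t$ admits a $\mu$-supremum $A_t\in\bm{\Sigma}$. I would then define
\[
g_\mathcal{H}(x)\doteq\sup\big\{t\in\mathbb{Q}\cap(0,+\infty) : x\in A_t\big\},
\]
with the convention $\sup\emptyset=0$. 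Measurability is immediate: for every real $s\ge 0$ one has $\{g_\mathcal{H}>s\}=\bigcup_{t>s}A_t$, a countable union (over rationals $t>s$) of members of $\bm{\Sigma}$, while $\{g_\mathcal{H}>s\}=\Omega$ for $s<0$.

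To verify (a), I would fix $h\in\mathcal{H}$ and show $\{h>g_\mathcal{H}\}\subseteq\bigcup_t\big(\{h>t\}\setminus A_t\big)$, the union running over positive rationals. Indeed, if $h(x)>g_\mathcal{H}(x)$, choose a rational $t$ with $g_\mathcal{H}(x)<t<h(x)$; then $x\in\{h>t\}$, whereas $g_\mathcal{H}(x)<t$ forces $x\notin A_t$. Each set $\{h>t\}\setminus A_t$ lies in $\mathcal{N}_\mu$ by clause (a) of the definition of $\mu$-supremum, so the countable union is $\mu$-null and (a) follows.

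The delicate point is (b), where the constant levels must be used to separate the variable function $u$ from $g_\mathcal{H}$. Given $F$ and $u$ as in the statement, I would first observe that $\{g_\mathcal{H}>u\}\subseteq\bigcup_t\big(A_t\cap\{u<t\}\big)$, since $g_\mathcal{H}(x)>u(x)$ yields a positive rational $t$ with $x\in A_t$ and $t>u(x)$. It therefore suffices to prove $B_t\cap A_t\in\mathcal{N}_\mu$, where $B_t\doteq F\cap\{u<t\}$. For this I would invoke clause (b) of the $\mu$-supremum property for $A_t$: it is enough to check that $B_t\cap\{h>t\}\in\mathcal{N}_\mu$ for every $h\in\mathcal{H}$. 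But on $\{u<t\}\cap\{h>t\}$ one has $h>t>u$, so $B_t\cap\{h>t\}\subseteq F\cap\{h>u\}\in\mathcal{N}_\mu$ by hypothesis; hence $B_t\cap A_t\in\mathcal{N}_\mu$, and taking the countable union over positive rationals gives $F\cap\{g_\mathcal{H}>u\}\in\mathcal{N}_\mu$. The one thing to be careful about throughout is to index every level set by rationals, so that each union which must be negligible remains countable.
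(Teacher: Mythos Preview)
Your proof is correct and follows essentially the same route as the paper: construct the envelope level by level by taking, for each positive rational $t$, a $\mu$-supremum $A_t$ of $\{\{h>t\}:h\in\mathcal{H}\}$ (the paper writes $U_q$), define $g_\mathcal{H}$ as the supremum of the rationals whose level set contains the point, and then verify (a) and (b) by the two clauses of the $\mu$-supremum definition together with countable unions over rationals. The only cosmetic difference is that the paper indexes over $\mathbb{Q}\cap[0,\infty)$ while you use $\mathbb{Q}\cap(0,\infty)$, which is immaterial.
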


\begin{proof}[Property (a)]
For each $q \in\mathbb{Q}_+ = \mathbb{Q} \bigcap [0,{\infty})$ 
and each $h\in \mathcal{H}$, we define $H_q = \{h > q\}$. 

\noindent
For each fixed $q \in\mathbb{Q}_+$, we denote by $U_q$ a $\mu$-supremum of the family $\{H_q : h \in\mathcal{H}\}$.

\noindent
The function $g_\mathcal{H}:\Omega \rightarrow [0,{\infty}]$ is then defined as follows
\begin{center}
$g_\mathcal{H}(x)
=
\max\big\{0 ,\ \sup\{q \in \mathbb{Q}_+ : x \in U_q\}\big\}$.
\end{center}

\noindent
We have that for all $t \geq 0$ $\{g_\mathcal{H}>t\} 
= \bigcup\{ U_q : q \in \mathbb{Q}_+, q>t\}$;
so, the function $g_\mathcal{H}$ is measurable.

\noindent
Let us prove property (a). Let $h \in \mathcal{H}$.
 By definition of $g_\mathcal{H}$ we obtain that 

\begin{align*}
\noindent
\{h> g_\mathcal{H}\} 
& = 
\bigcup \big\{ \{h>q\}{\cap}\{q>g_\mathcal{H}\} : q\in\mathbb{Q}_+\big\} \subseteq 
\bigcup \{ H_q \setminus U_q : q \in \mathbb{Q}_+ \} \in\mathcal{N}_\mu.
\end{align*}
\end{proof}

\begin{proof}[Property (b)]
Let $F \in\bm{\Sigma}$ and 
$u:\Omega \rightarrow [0,{\infty}]$
be a measurable function such that 
$\forall h \in \mathcal{H}\ F \cap \{h>u\} \in\mathcal{N}_\mu$.

\noindent
We observe that 
\begin{center}
$
F\cap \{h >u\}
=
\bigcup \big\{ F \cap \{h>q\} \cap \{q>u\} : q\in\mathbb{Q}_+\big\}
=
\bigcup \big\{ F \cap \{q>u\} \cap H_q : q\in\mathbb{Q}_+\big\}.
$
\end{center}
So, it is clear that for every $q \in\mathbb{Q}_+$
and for every $h \in \mathcal{H}$
\begin{center}
$F \cap \{q>u\} \cap  H_q \in\mathcal{N}_\mu$.
\end{center}
By definition of $U_q$, as the $\mu$-supremum of the family $\{H_q : h \in\mathcal{H}\}$, we can apply property (b) of Definition~\ref{def: mu-sup} with $B=F\cap \{q>u\}$. As $B\cap \{h>q\} \in\mathcal{N}_\mu$ for all $h \in \mathcal{H}$, we obtain that 
\begin{center}
$F \cap  \{q>u\} \cap U_q \in\mathcal{N}_\mu$.
\end{center}

\noindent
We conclude that 
\begin{align*}
F \cap \{g_\mathcal{H}>u\}
& =
\bigcup  \big\{ F \cap \{q>u\}  \cap \{g_\mathcal{H}>q\} 
: q \in\mathbb{Q}_+\big\} \\
& \subseteq
\bigcup \big\{ F \cap \{q>u\} \cap U_q : q\in\mathbb{Q}_+\big\}
\in\mathcal{N}_\mu.
\end{align*}
\end{proof}

\begin{remark}
The arguments in the proof of the previous proposition are inspired by  some of McShane in~\cite{McShane1962}. 
\end{remark}

\section{Compatible measures}\label{sec: comp meas}

In this section we consider two relations between two measures $\mu$ and $\nu$ defined on a measurable space $(\Omega,\bm{\Sigma})$.
 
\begin{definition}
The pair $(\mu,\nu)$ is \textit{compatible} on $(\Omega,\bm{\Sigma})$ if for every $A\in\bm{\Sigma}$ such that $0<\nu(A)<+\infty$, there exists $B\in\bm{\Sigma}$ such that $B\subseteq A$, $0<\nu(B)$, and $0<\mu(B)<+\infty$.
\end{definition}

\begin{definition}
The pair $(\mu,\nu)$ satisfies the \textit{Radon-Nikodym property} if there exists a measurable function 
$g_{\mu,\nu}:\Omega \rightarrow [0,{\infty}]$
such that for every $A\in\mathcal{F}_\nu$
\[
\nu(A)
=
\int_{A} g_{\mu,\nu}\ d\mu\ .
\]
The function $g_{\mu,\nu}$ is called a \textit{density} of $\nu$ with respect to $\mu$.
\end{definition}

We shall prove that, if the pair $(\mu,\nu)$ satisfies the Radon-Nikodym property, then~$(\mu,\nu)$ is compatible.

\begin{remark}
Let us recall that $\nu$ is absolutely continuous with respect to $\mu$ on~$(\Omega,\bm{\Sigma})$ (briefly, $\nu$ is a.c. wrt $\mu$), if $\mathcal{N}_\mu \subseteq \mathcal{N}_\nu$. 
A classical result (see, for example,~\cite{Rao2004},~\cite{BrownPearcy1977}, or~\cite{Halmos1974}) asserts that if $\nu$ is a.c. wrt $\mu$, and $\mu$ is $\sigma$-finite, then the pair $(\mu,\nu)$ satisfies the Radon-Nikodym property. We remark that, if $(\mu,\nu)$ is compatible on~$(\Omega,\bm{\Sigma})$ and $A\in \mathcal{F}_\mu \cap \mathcal{F}_\nu$, then $\nu$ is a.c. wrt $\mu$ on~$(A,\bm{\Sigma}_A)$, where $\bm{\Sigma}_A=\{B\in\bm{\Sigma} : B\subseteq A\}$.
\end{remark}

\begin{example}
Let us consider the Lebesgue measure $\lambda$ on the measurable space 
$\big([0,1], \mathcal{B}\big)$ where~$\mathcal{B}$ is the family of Borel subsets  of $[0,1]$.
One can verify that 
\begin{itemize}
	\item $\lambda$ is a.c. wrt $\#$ (as any other measure does),
	\item the pair $(\lambda,\#)$ does not satisfy the Radon-Nikodym property,
	\item the pair $(\lambda,\#)$ is not compatible.
\end{itemize}
\end{example}

\begin{example}
Let $\Omega=\{a,b\}$, $\nu(\{a\})=\mu(\{a\})=1$, $\nu(\{b\})=+\infty$, and $\mu(\{b\})=0$. Then,
\begin{itemize}
	\item the pair $(\mu,\nu)$ is compatible, 
	\item $\nu$ is not a.c. wrt $\mu$,
	\item the pair $(\mu,\nu)$ satisfies the Radon-Nikodym property, with $g_{\nu,\mu}=1_{\{a\}}$.
\end{itemize}
\end{example}

\begin{theorem}\label{thm: RN implies compatib}
Let $\mu$ and $\nu$ be measures on the measurable space $(\Omega , \bm{\Sigma})$. If the pair~$(\mu,\nu)$ satisfies the Radon-Nikodym property,  then the pair $(\mu,\nu)$ is compatible on~$(\Omega,\bm{\Sigma})$.
\end{theorem}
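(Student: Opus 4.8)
The plan is to use the density $g = g_{\mu,\nu}$ furnished by the Radon-Nikodym property to carve out, inside any set $A$ with $0 < \nu(A) < +\infty$, a subset on which $g$ is bounded away from $0$; such a subset will automatically inherit finite positive $\mu$-measure and positive $\nu$-measure. First I would fix $A \in \bm{\Sigma}$ with $0 < \nu(A) < +\infty$. Since $A \in \mathcal{F}_\nu$, the Radon-Nikodym property gives
\[
\int_A g \, d\mu = \nu(A) > 0 ,
\]
so $g$ cannot vanish $\mu$-almost everywhere on $A$. The natural candidate for the required set $B$ is a lower truncation of the form $B_n = A \cap \{g > 1/n\}$.

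Next I would select a suitable index. The sets $B_n$ increase to $A \cap \{g > 0\}$, so $g\,\mathbf{1}_{B_n} \uparrow g\,\mathbf{1}_{A \cap \{g>0\}}$ pointwise, and the monotone convergence theorem yields
\[
\lim_{n \to \infty} \int_{B_n} g \, d\mu = \int_{A \cap \{g>0\}} g\, d\mu = \int_A g\, d\mu = \nu(A) > 0 .
\]
Hence there is an $n$ for which $\int_{B_n} g\, d\mu > 0$; I fix such an $n$ and set $B := B_n$. Because a $\mu$-null set carries zero integral, $\int_B g\, d\mu > 0$ forces $\mu(B) > 0$.

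It remains to establish finiteness of $\mu(B)$ together with positivity of $\nu(B)$, which is where the two demands of compatibility must be reconciled. On $B$ one has $g > 1/n$, so Markov's inequality (equivalently, monotonicity of the integral) gives
\[
\frac{1}{n}\,\mu(B) \le \int_B g\, d\mu \le \int_A g\, d\mu = \nu(A) < +\infty ,
\]
whence $\mu(B) \le n\,\nu(A) < +\infty$, so that $0 < \mu(B) < +\infty$. Finally, since $B \subseteq A$ we have $\nu(B) \le \nu(A) < +\infty$, so $B \in \mathcal{F}_\nu$ and the Radon-Nikodym property applies once more to give $\nu(B) = \int_B g\, d\mu > 0$. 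Thus $B$ witnesses compatibility for $A$, and since $A$ was arbitrary the pair $(\mu,\nu)$ is compatible. The only genuine subtlety I anticipate is that $g$ may take the value $+\infty$, which threatens to make naive candidates for $B$ carry infinite $\mu$-measure; the single lower truncation $\{g > 1/n\}$ resolves this, since it simultaneously guarantees $\mu(B) > 0$ through the positive integral and $\mu(B) < +\infty$ through the Markov bound against the finite quantity $\nu(A)$.
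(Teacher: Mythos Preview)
Your argument is correct and essentially identical to the paper's: both pass to the truncations $A\cap\{g>1/n\}$, use monotone convergence to find one with positive integral, and invoke Markov's inequality against $\nu(A)<+\infty$ to bound its $\mu$-measure. Your write-up is in fact slightly more careful in explicitly noting that $\mu(B)>0$ follows from $\int_B g\,d\mu>0$, and in separately verifying $B\in\mathcal{F}_\nu$ before invoking the Radon--Nikodym identity on $B$.
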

\begin{proof}
Let $g=g_{\mu,\nu}$ be the density of $\nu$ with respect to $\mu$.
Let $A \in\bm{\Sigma}$ such that $0<\nu(A) <+\infty$.
Notice that we can write 
$A = \big(A \cap\{g =0\}\big) \sqcup \big(A\cap\{g > 0\}\big)$.
%, where the symbol~$\sqcup$ denotes a \textit{disjoint union}.

\noindent
By hypothesis, $\nu(A) = \int_A g d\mu$ and we can write 
$\nu(A) = \int_A g d\mu = \int_{A \cap \{g > 0 \}} g d\mu$.
%\noindent
Considering $A_n = A \cap \{g > 1/n\}$,
we have that 
$A_n \subseteq A_{n+1} \subseteq A$,
and
$\displaystyle \cup_n A_n = A \cap \{g > 0\}$. 
%\noindent
So, by Levi's Monontone Convergence Theorem, we have that 
\begin{center}
$\displaystyle 
\nu(A) = \int_A g d\mu = \int_{A \cap \{g> 0\}} g d\mu =
\lim_n \int_{A_n} g d\mu = \lim_n \nu(A_n)$.
\end{center}

\noindent
By the Markov inequality (see~\cite{Willem2022}), we have also that,\
 for every $n \in\mathbb{N}$,
\begin{center}
$\displaystyle
\nu(A_n)= \int_{A_n} g d\mu
\ge \frac{1}{n}\mu(A_n)$,
\end{center}
that is,
for every $n \in\mathbb{N}$, 
$\mu(A_n)\le n \int_{A_n} g d\mu  = n \nu(A_n) <+\infty$.
So, for each $n \in\mathbb{N}$ we have that $A_n \in \mathcal{F}_\mu$.
Moreover, if $m$ is large enough, we have that 
$\nu(A_m)= \int_{A_m} g  d\mu > 0$. 
\end{proof}

\begin{proposition}\label{prop: compatible implies}
Let $\mu$ and $\nu$ be measures on the measurable space $(\Omega , \bm{\Sigma})$. If the pair~$(\mu,\nu)$ is compatible on~$(\Omega , \bm{\Sigma})$ then, for every $A \in
\mathcal{F}_\nu$, we have that
\begin{center}
$\nu(A) = \sup\{ \nu(B) : B \subseteq A ,\ B \in \mathcal{F}_\mu\}$.
\end{center}
\end{proposition}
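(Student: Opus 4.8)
The plan is to set $s = \sup\{\nu(B) : B \subseteq A,\ B \in \mathcal{F}_\mu\}$ and argue by contradiction, assuming $s < \nu(A)$. First I would record the trivial half: by monotonicity of $\nu$, every competitor $B \subseteq A$ satisfies $\nu(B) \le \nu(A)$, so that $s \le \nu(A) < +\infty$, and it only remains to rule out strict inequality.

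Next, I would extract from the definition of the supremum an \emph{increasing} sequence of finite-$\mu$-measure subsets of $A$ that approximate $s$. Choosing $B'_n \subseteq A$ in $\mathcal{F}_\mu$ with $\nu(B'_n) \to s$ and setting $B_n = B'_1 \cup \dots \cup B'_n$ produces an increasing sequence in $\mathcal{F}_\mu$ (a finite union of sets of finite $\mu$-measure still has finite $\mu$-measure), contained in $A$, with $\nu(B_n) \to s$. Letting $B = \bigcup_n B_n$, continuity of $\nu$ from below (a consequence of $\sigma$-additivity) gives $\nu(B) = s$. Since $A = B \sqcup (A \setminus B)$ and $\nu(A) < +\infty$, additivity yields $\nu(A \setminus B) = \nu(A) - s$, which under the contradiction hypothesis satisfies $0 < \nu(A \setminus B) < +\infty$.

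At this point compatibility enters. Applying it to the set $A \setminus B$ produces $C \subseteq A \setminus B$ with $\nu(C) > 0$ and $0 < \mu(C) < +\infty$. Finally I would splice $C$ onto the approximants: each $B_n \cup C$ lies in $A$, belongs to $\mathcal{F}_\mu$ (again a finite union of finite-$\mu$-measure sets), and is a disjoint union of $B_n$ and $C$, since $B_n \subseteq B$ while $C \subseteq A \setminus B$. Hence $\nu(B_n \cup C) = \nu(B_n) + \nu(C) \to s + \nu(C) > s$, so for $n$ large enough $\nu(B_n \cup C) > s$, contradicting the fact that $s$ is the supremum over exactly this class of sets.

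The main obstacle is conceptual rather than computational: one cannot simply take a single $B \in \mathcal{F}_\mu$ realizing (or nearly realizing) the supremum, because an increasing union of finite-$\mu$-measure sets may escape $\mathcal{F}_\mu$, its $\mu$-measure blowing up in the limit. The resolution is that the possibly-infinite-$\mu$-measure set $B$ is used only through its complement $A \setminus B$, where compatibility demands only $\nu$-finiteness; the surplus $C$ it supplies has finite $\mu$-measure, and attaching it to the finite approximants $B_n$ keeps every competing set inside $\mathcal{F}_\mu$ while forcing its $\nu$-measure strictly above $s$.
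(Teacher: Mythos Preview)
Your proof is correct and follows essentially the same route as the paper: take an increasing extremizing sequence $B_n\in\mathcal{F}_\mu$, set $B=\bigcup_n B_n$, and use compatibility on $A\setminus B$ to produce a finite-$\mu$-measure piece $C$ with $\nu(C)>0$ whose adjunction to the $B_n$ contradicts maximality of the supremum. Your write-up is slightly more explicit (you justify the passage to an increasing sequence and spell out why $B$ itself need not lie in $\mathcal{F}_\mu$), but the argument is the same.
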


\begin{proof}
Let us consider $A \in \mathcal{F}_\nu$, and define 
$b(A)\doteq \sup\{ \nu(B) : B \subseteq A , \ B\in\mathcal{F}_\mu\}$.
Let us consider an extremizing sequence, i.e., a sequence of sets 
$B_n \subseteq A$, $B_n \in \mathcal{F}_\mu$
such that $\nu(B_n) \nearrow b(A)$.
Without loss of generality, we can assume that $B_n \subseteq B_{n+1}$.
Let us consider $B = \cup_n B_n$.
So, $B \subseteq A$, and $\displaystyle\lim_n \nu(B_n)=\nu(B)$.
Considering the set 
$D=A \setminus B$, we want to show that $\nu(D)=0$.
Assume, by contradiction, that $\nu(D)>0$.
Then, we would also have 
$\nu(D) \leq \nu(A) <+\infty$. 
Since the pair $(\mu,\nu)$ is compatible, there must exist a set $E$ such that $E \subseteq D \subseteq A$, with
$0 <\mu(E)<+\infty$, and $\nu(E)> 0$.
Note that both $D$ and $E$ are disjoint from each $B_n \subseteq B$. 
Thus, we also know that $B_n \sqcup E \in \mathcal{F}_\mu$. 
Therefore, $\displaystyle \lim_n \nu(B_n) < \lim_n \nu(B_n) + \nu(E)
= \lim_n \nu(B_n \sqcup E) \le \lim_n \nu(B_n)$: a contradiction.
Then, $\nu(D)=0$, and so 
$\displaystyle \nu(A)=\nu(B)= \lim_n \nu(B_n)= b(A)$.
\end{proof}

\section{Local Theory}\label{sec: local theory}

Let $(\Omega , \bm{\Sigma})$ be a measurable space.
The symbol $\mathcal{H}_{{\int} d\mu {\leq} \nu}$
will indicate the set of all measurable functions 
$h:\Omega \rightarrow [0,{\infty}]$ such that 

\begin{itemize}
	\item $\{h>0\}\in \mathcal{F}_\mu$, 
	\item for all $M \in \bm{\Sigma}$, $\int_M h d\mu \le \nu(M)$.
\end{itemize}
Note that $\mathcal{H}_{{\int} d\mu {\leq} \nu}\ne \emptyset$, 
as $0\in \mathcal{H}_{{\int} d\mu {\leq} \nu}$.

\begin{lemma}\label{lemma:3.1}
Let $h,k \in \mathcal{H}_{{\int}d\mu {\leq}\nu}$.
Then, 
$\max\{h,k\}= h\bigvee k \in\mathcal{H}_{{\int}d\mu \leq\nu}$. 
\end{lemma}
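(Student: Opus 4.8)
The plan is to verify directly that $h\vee k$ satisfies the two defining conditions of $\mathcal{H}_{\int d\mu\leq\nu}$. The first condition is immediate: since $\{h\vee k>0\}=\{h>0\}\cup\{k>0\}$ and both sets lie in $\mathcal{F}_\mu$ by hypothesis, subadditivity of $\mu$ gives $\mu(\{h\vee k>0\})\le\mu(\{h>0\})+\mu(\{k>0\})<+\infty$, so $\{h\vee k>0\}\in\mathcal{F}_\mu$.

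The heart of the argument is the integral inequality $\int_M(h\vee k)\,d\mu\le\nu(M)$ for every $M\in\bm{\Sigma}$. First I would introduce the two sets $P=\{h\ge k\}$ and $Q=\{k>h\}$, which partition $\Omega$. They are measurable: writing $\{k>h\}=\bigcup_{q\in\mathbb{Q}}\big(\{h<q\}\cap\{k>q\}\big)$ expresses $Q$ as a countable union of measurable sets, and $P$ is its complement. By construction $h\vee k=h$ on $P$ and $h\vee k=k$ on $Q$.

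Next, fixing $M\in\bm{\Sigma}$, I would split $M=(M\cap P)\sqcup(M\cap Q)$ and estimate
\[
\int_M(h\vee k)\,d\mu=\int_{M\cap P}h\,d\mu+\int_{M\cap Q}k\,d\mu\le\nu(M\cap P)+\nu(M\cap Q)=\nu(M),
\]
where the inequality applies the defining property of $h$ to the set $M\cap P$ and that of $k$ to the set $M\cap Q$, and the last equality is the additivity of $\nu$ on the disjoint union. This is exactly the second condition, completing the proof.

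I do not expect a genuine obstacle here; the only point demanding care is the measurability of the partition $\{P,Q\}$, handled by the standard rational-separation argument above, after which everything reduces to the additivity of the integral and of $\nu$ over disjoint measurable pieces.
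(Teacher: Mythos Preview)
Your proof is correct and follows essentially the same route as the paper: both verify $\{h\vee k>0\}=\{h>0\}\cup\{k>0\}\in\mathcal{F}_\mu$ and then split $M$ along the set where one function dominates the other (the paper uses $N=M\cap\{h>k\}$, you use $P=\{h\ge k\}$, an immaterial difference) to reduce the integral inequality to additivity of~$\nu$. Your extra remark on the measurability of the partition is a harmless elaboration the paper leaves implicit.
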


\begin{proof}
As $\left\{h\bigvee k > 0\right\} = \{h > 0\} \cup \{k > 0\}$, 
then also 
$\{h \bigvee k > 0\} \in\mathcal{F}_\mu$.

\noindent
Let us now consider $M \in\bm{\Sigma}$; if we define 
$N = M {\cap} \{h>k\}$ then, 

\noindent
$\int_M (h \bigvee k) d\mu
\kern-3pt
 =
\kern-3pt
\int_{N} (h \bigvee k) d\mu  + \int_{M\backslash N} (h \bigvee k) d\mu
\kern-3pt
 =
\kern-3pt
\int_{N} h d\mu  + \int_{M\backslash N} k d\mu
\kern-3pt
\le
\kern-3pt
\nu(N)+\nu(M\backslash N)
=
\nu(M)$
\end{proof}

\begin{lemma}\label{lemma: h_A}
For each 
$A \in\mathcal{F}_\mu \bigcap \mathcal{F}_\nu$, 
there exists $h_A \in \mathcal{H}_{{\int}d\mu {\leq}\nu}$
such that 
\begin{center}
$
\int_A h_A d\mu 
=
\sup\left\{ \int_A h d\mu  : h \in \mathcal{H}_{{\int}d\mu{\leq}\nu} \right\}
=
s(A)
$
\end{center}
\end{lemma}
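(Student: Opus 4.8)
The plan is to obtain $h_A$ by a variational argument, realizing it as the increasing limit of a maximizing sequence, exactly in the spirit of the extremizing-sequence construction used in Proposition~\ref{prop: compatible implies}. First I would observe that the supremum $s(A)$ is finite: for every $h\in\mathcal{H}_{\int d\mu\le\nu}$ one has $\int_A h\,d\mu\le\nu(A)<+\infty$ because $A\in\mathcal{F}_\nu$. Hence I may fix a maximizing sequence $(h_n)$ in $\mathcal{H}_{\int d\mu\le\nu}$ with $\int_A h_n\,d\mu\nearrow s(A)$.

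Two successive reductions make this sequence manageable. First, replacing each $h_n$ by $h_n\,1_A$ keeps it in $\mathcal{H}_{\int d\mu\le\nu}$: its support shrinks to a subset of $A\in\mathcal{F}_\mu$, and for every $M\in\bm{\Sigma}$ one has $\int_M h_n\,1_A\,d\mu=\int_{M\cap A}h_n\,d\mu\le\nu(M\cap A)\le\nu(M)$, while the quantity $\int_A h_n\,d\mu$ is unchanged. So I may assume every $h_n$ is supported in $A$. Second, by Lemma~\ref{lemma:3.1} the function $\max\{h_1,\dots,h_n\}$ again lies in $\mathcal{H}_{\int d\mu\le\nu}$; replacing $h_n$ by this maximum makes the sequence pointwise increasing, does not decrease $\int_A h_n\,d\mu$, and keeps it bounded above by $s(A)$, so the limit of the integrals is still $s(A)$. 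Thus I may assume $(h_n)$ is increasing and supported in $A$.

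Now define $h_A\doteq\sup_n h_n=\lim_n h_n$, which is measurable as an increasing pointwise limit of measurable functions. I claim $h_A\in\mathcal{H}_{\int d\mu\le\nu}$ and that it realizes the supremum. The support condition is immediate: $\{h_A>0\}=\bigcup_n\{h_n>0\}\subseteq A\in\mathcal{F}_\mu$. The integral constraint follows from the Monotone Convergence Theorem: for every $M\in\bm{\Sigma}$ we get $\int_M h_A\,d\mu=\lim_n\int_M h_n\,d\mu\le\nu(M)$. Applying the same theorem on $A$ yields $\int_A h_A\,d\mu=\lim_n\int_A h_n\,d\mu=s(A)$, which is precisely the asserted equality, so $h_A$ is the desired maximizer.

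The only genuine difficulty is ensuring that the limiting function remains admissible, namely that $\{h_A>0\}\in\mathcal{F}_\mu$, since a countable union of finite-$\mu$-measure supports need not have finite measure. This is exactly what the first reduction circumvents: by truncating each competitor to $A$, all supports — and hence the support of the limit — are forced to lie inside the single fixed finite-measure set $A$, at no cost to the value $\int_A h_n\,d\mu$ on which the supremum depends. Once this truncation is in place, both the admissibility of $h_A$ and the attainment of $s(A)$ are routine consequences of monotone convergence.
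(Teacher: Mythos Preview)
Your proof is correct and follows essentially the same route as the paper: take a maximizing sequence, use Lemma~\ref{lemma:3.1} to make it increasing, truncate by $1_A$, and pass to the limit via monotone convergence. The paper compresses the truncation step by simply defining $h_A=\lim_n h_n\,1_A$ at the end, whereas you perform the truncation up front and explain more carefully why it is needed to keep $\{h_A>0\}\in\mathcal{F}_\mu$; the arguments are otherwise identical.
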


\begin{proof}
Let $(h_n)$ be a sequence of function in 
$\mathcal{H}_{{\int}d\mu {\leq}\nu}$
 such that 
${\int}_A h_n \ d\mu\rightarrow s(A)$.
By the preceding Lemma, we can assume that 
$h_n {\leq} h_{n+1}$. 
Levi's Monotone Convergence Theorem implies that 
$
\displaystyle h_A = \lim_n h_n 1_A = \sup_n h_n 1_A $ satisfies
$\int_A h_A d\mu = s(A)$
and,
\noindent
for every $M \in\bm{\Sigma}$, 
$
\int_M h_A d\mu 
=
\int_{M\cap A} h_A d\mu
\le
\nu(M\cap A) \le\nu(M)
$, where $1_A$ is the indicator function of set $A$.
\end{proof}

\begin{lemma}\label{lemma: 3.3}%  [chiarifica Schep] 
Let $A \in \mathcal{F}_\mu \bigcap \mathcal{F}_\nu$, and let~$h_A$ be the function of the foregoing lemma.
For every~$C\in\bm{\Sigma}$ such that $C\subseteq A$ and $\mu(C)>0$, and for every $\varepsilon > 0$, there exists 
$D=D_{C,\varepsilon} \in \bm{\Sigma}$ such that $D \subseteq C$ and
$\nu(D) < \int_D (h_A+\varepsilon) d\mu $.
\end{lemma}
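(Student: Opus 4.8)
The plan is to argue by contradiction, exploiting the maximality of $h_A$ established in Lemma~\ref{lemma: h_A}. Suppose the conclusion fails for some $C \subseteq A$ with $\mu(C) > 0$ and some $\varepsilon > 0$; then for every measurable $D \subseteq C$ one would have $\nu(D) \geq \int_D (h_A + \varepsilon)\, d\mu$. I would then build from $h_A$ a competitor that strictly beats it in the variational problem defining $s(A)$, contradicting $\int_A h_A\, d\mu = s(A)$.

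Concretely, I would set $k = h_A + \varepsilon\, 1_C$ and first check that $k \in \mathcal{H}_{\int d\mu \leq \nu}$. That $\{k > 0\} \in \mathcal{F}_\mu$ is immediate, since $\{k > 0\} = \{h_A > 0\} \cup C$ with $\{h_A > 0\} \in \mathcal{F}_\mu$ (as $h_A \in \mathcal{H}_{\int d\mu \leq \nu}$) and $C \subseteq A \in \mathcal{F}_\mu$. The verification of the integral inequality is the key step, and the main obstacle: for an arbitrary $M \in \bm{\Sigma}$ I would split $M = (M \cap C) \sqcup (M \setminus C)$ and treat the two pieces separately. On $M \setminus C$ one has $k = h_A$, so $\int_{M \setminus C} k\, d\mu \leq \nu(M \setminus C)$ directly from $h_A \in \mathcal{H}_{\int d\mu \leq \nu}$. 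On $M \cap C$ one has $k = h_A + \varepsilon$, and applying the contradiction hypothesis to the subset $D = M \cap C \subseteq C$ gives $\int_{M \cap C} k\, d\mu = \int_{M \cap C}(h_A + \varepsilon)\, d\mu \leq \nu(M \cap C)$. Adding the two estimates yields $\int_M k\, d\mu \leq \nu(M)$, so $k$ indeed lies in $\mathcal{H}_{\int d\mu \leq \nu}$.

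The final step is a direct computation. Since $C \subseteq A$, linearity of the integral for non-negative functions gives
\[
\int_A k\, d\mu = \int_A h_A\, d\mu + \varepsilon\, \mu(C) = s(A) + \varepsilon\, \mu(C).
\]
Because $A \in \mathcal{F}_\mu$ we have $0 < \mu(C) \leq \mu(A) < +\infty$, and because $s(A) \leq \nu(A) < +\infty$ (apply the defining inequality of $\mathcal{H}_{\int d\mu \leq \nu}$ with $M = A$, using $A \in \mathcal{F}_\nu$), the right-hand side is a finite quantity strictly larger than $s(A)$. This contradicts the definition of $s(A)$ as the supremum of $\int_A h\, d\mu$ over $\mathcal{H}_{\int d\mu \leq \nu}$, completing the argument. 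I expect the only delicate point to be the case analysis in verifying the integral inequality for $k$; the remainder is bookkeeping, the crucial ingredient being the strict maximality of $h_A$ together with the finiteness of $s(A)$ and of $\mu(C)$.
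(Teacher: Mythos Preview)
Your proof is correct and follows essentially the same approach as the paper: both hinge on the function $k=h_A+\varepsilon\,1_C$ and the observation that it cannot belong to $\mathcal{H}_{\int d\mu\le\nu}$ by the maximality of $h_A$. The paper states this directly and then extracts a violating set $B$ (taking $D=B\cap C$), whereas you phrase the same idea contrapositively; the splitting $M=(M\cap C)\sqcup(M\setminus C)$ and the finiteness checks are identical in substance.
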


\begin{proof}
It is clear that 
$h_A + \varepsilon 1_C \notin \mathcal{H}_{{\int}d\mu {\leq}\nu}$.
By definition of
$\mathcal{H}_{{\int}d\mu {\leq}\nu}$,
there exists $B \in\bm{\Sigma}$ such that
$\nu(B) < \int_B (h_A +\varepsilon 1_C) d\mu 
\le 
\int_{B \cap C} (h_A +\varepsilon )d\mu + \nu(B\backslash C)$.
We conclude that 
$\nu(B \cap C) < \int_{B \cap C} (h_A +\varepsilon) d\mu \ \square$
\end{proof}

\begin{proposition}\label{Lemma 3.4} 
Let $\mu$ and $\nu$ be measures on the measurable space $(\Omega , \bm{\Sigma})$. Assume the pair~$(\mu,\nu)$ is compatible on~$(\Omega,\bm{\Sigma})$ and $A\in \mathcal{F}_\mu \cap \mathcal{F}_\nu$, then 
the function $h_A$ of Lemma~\ref{lemma: h_A} satisfies 
\begin{center}
$\displaystyle
\nu(A)
=
\int_A h_A d\mu
=
\sup\left\{
\int_A h \ d\mu : h\in \mathcal{H}_{{\int}d\mu {\leq}\nu}
\right\}
$.
\end{center}
\end{proposition}

\begin{proof}
It suffices to prove that $\nu(A) \le \int_A h_A d\mu$.

\noindent
Let $\varepsilon ${\textgreater}0 and define 
$\beta(A) 
=
\sup\left\{\mu(B) : B\in\bm{\Sigma}, B \subseteq A, \nu(B)\le\int_B
(h_A + \varepsilon) d\mu\right\}
$.

\noindent
We construct inductively a sequence $(B_n)\subset \bm{\Sigma}$ such that 
\begin{itemize}
	\item $B_1 \subseteq B_2 \subseteq \cdots \subseteq A$,
	\item $\nu(B_n) \le \int_{B_n} (h_A + \varepsilon) d\mu$,
	\item $\mu(B_n) \rightarrow \beta(A)$.
\end{itemize}
We choose $B_1=\emptyset$. Given $B_n$, there exists $C_n\in\bm{\Sigma}$ such that 
\begin{itemize}
	\item $C_n \subseteq A\setminus B_n$,
	\item $\nu(C_n) \le \int_{C_n} (h_A + \varepsilon) d\mu$,
	\item $\beta(A)-\mu(B_n)-\frac{1}{n} \le \mu(C_n)$.
\end{itemize}
We define $B_{n+1} = B_n \sqcup C_n$ (as $B_n$ and $C_n$ are disjoint).
So that 
\begin{center}
$
\beta(A)-\frac{1}{n} \le \mu(B_n) +\mu(C_n)= \mu(B_{n+1}) \le \beta(A)
$.
\end{center}
Let us define $B = \bigcup_n B_n$.
%\noindent
It follows from Levi's Monotone Convergence Theorem that 
$\nu(B)\le \int_B (h_A + \varepsilon) d\mu $. 
%\noindent
Let $D = A \backslash B$. We want to show that $\mu(D)=0$.

\noindent
If $\mu(D) > 0$, then the preceding Lemma implies the existence of 
$C\subseteq D$ such that 
$\nu(C) < \int_C (h_A + \varepsilon) d\mu$.
%\noindent
It follows that 
\begin{center}
$\displaystyle
\beta(A)= \lim_n \mu (B_n) = \mu (B) < \mu(B)+ \mu(D) 
= \mu(B+D) \le \beta(A)$.
\end{center}

\noindent
This is a contradiction; so, we have that $\mu(D)=0$.
By compatibility of~$(\mu,\nu)$ which, in our hypothesis, corresponds to absolute continuity of $\nu$ wrt $\mu$, we have that $\nu(D)=0$.
%\noindent
We conclude that 
$\nu(A) = \nu(B)  \le \int_B (h_A + \varepsilon) d\mu 
= 
\int_A (h_A + \varepsilon ) d\mu$.

\noindent
As $\varepsilon >0$ is arbitrary, 
$\nu(A) \le \int_A h_A d\mu$.
\end{proof}

\begin{remark}
Lemma~\ref{lemma: h_A} is contained in the proof of Theorem~B at pag.128 of~\cite{Halmos1974}. We we gave a short proof for the sake of completeness. In order to prove that $\nu(A)=\int_A h_A\ d\mu$, Halmos uses the Hahn Decomposition Theorem. Various more elementary arguments were given by Schep in~\cite{Schep2003} and its addenda (posted on his web page, see \url{https://people.math.sc.edu/schep/} under ``Selected Reprints'').  
In our variational approach we consider $\nu(A)=\int_A h_A\ d\mu$ as the Euler equation satisfied by $h_A$.
\end{remark}

\section{Global Theory}\label{sec: global theory}
Our main result is the following.
\begin{theorem}\label{Thm: Main}%Theorem 4.1 
Let $(\Omega , \bm{\Sigma})$ be a measurable space. Let $\mu$ be a measure on $(\Omega, \bm{\Sigma})$.\\
Statements $(a)$ and $(b)$ are equivalent:
\begin{enumerate}
	\item[(a)] {\small $\mu$ is weakly localizable;}
	\item[(b)] {\small for every measure $\nu$ on~$(\Omega, \bm{\Sigma})$, if the pair $(\mu,\nu)$ is  compatible on~$(\Omega, \bm{\Sigma})$, 
	then 
	the pair $(\mu,\nu)$ satisfies the Radon-Nikodym property.}
\end{enumerate}
\end{theorem}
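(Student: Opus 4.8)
The plan is to prove the two implications separately, assembling the local theory of Section~\ref{sec: local theory} into a global density for $(a)\Rightarrow(b)$, and building a suitable test measure for the converse.

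\textbf{Proof that $(a)\Rightarrow(b)$.} Assume $\mu$ is weakly localizable and let $\nu$ be compatible with $\mu$. Every $h\in\mathcal{H}_{\int d\mu\leq\nu}$ is measurable with $\{h>0\}\in\mathcal{F}_\mu$, so Proposition~\ref{prop: upper envelope} furnishes an envelope $g\doteq g_{\mathcal{H}_{\int d\mu\leq\nu}}$; I claim $g$ is a density of $\nu$ with respect to $\mu$. First I would fix $A\in\mathcal{F}_\mu\cap\mathcal{F}_\nu$ and show $g=h_A$ $\mu$-a.e.\ on $A$, where $h_A$ is the maximizer of Lemma~\ref{lemma: h_A}. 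The inequality $h_A\leq g$ $\mu$-a.e.\ is property~(a) of the envelope. For the reverse inequality on $A$, I would note that for every $h\in\mathcal{H}_{\int d\mu\leq\nu}$ one has $A\cap\{h>h_A\}\in\mathcal{N}_\mu$: otherwise $h\bigvee h_A\in\mathcal{H}_{\int d\mu\leq\nu}$ (Lemma~\ref{lemma:3.1}) would satisfy $\int_A (h\bigvee h_A)\,d\mu>\int_A h_A\,d\mu=s(A)$, contradicting the definition of $s(A)$. Property~(b) of the envelope, applied with $F=A$ and $u=h_A$, then yields $A\cap\{g>h_A\}\in\mathcal{N}_\mu$. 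Hence $g=h_A$ $\mu$-a.e.\ on $A$ and, by Proposition~\ref{Lemma 3.4}, $\int_A g\,d\mu=\int_A h_A\,d\mu=\nu(A)$.

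\textbf{From finite to $\nu$-finite sets.} To pass to an arbitrary $A\in\mathcal{F}_\nu$, I would invoke Proposition~\ref{prop: compatible implies} to obtain an increasing sequence $B_n\subseteq A$ with $B_n\in\mathcal{F}_\mu$ and $\nu(B_n)\nearrow\nu(A)$; each $B_n$ lies in $\mathcal{F}_\mu\cap\mathcal{F}_\nu$, so the previous step and the monotone convergence theorem give $\int_B g\,d\mu=\lim_n\nu(B_n)=\nu(A)$ with $B=\bigcup_n B_n$. It remains to show $\int_D g\,d\mu=0$ for $D=A\setminus B$, which has $\nu(D)=0$. Since $D$ may carry infinite $\mu$-measure and need not be semifinite, one cannot simply split it into finite pieces. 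I would instead apply property~(b) of the envelope with $F=D$ and $u\equiv 0$: for each $h\in\mathcal{H}_{\int d\mu\leq\nu}$ the set $D\cap\{h>0\}$ lies in $\mathcal{F}_\mu$ and satisfies $\int_{D\cap\{h>0\}}h\,d\mu\leq\nu(D)=0$, forcing $\mu(D\cap\{h>0\})=0$; property~(b) then gives $\mu(D\cap\{g>0\})=0$, i.e.\ $\int_D g\,d\mu=0$. Therefore $\int_A g\,d\mu=\nu(A)$ for all $A\in\mathcal{F}_\nu$, establishing the Radon-Nikodym property.

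\textbf{Proof that $(b)\Rightarrow(a)$.} Given a family $\mathcal{E}\subseteq\mathcal{F}_\mu$, let $\mathcal{G}_0$ be the collection of all finite unions of members of $\mathcal{E}$ (so $\mathcal{G}_0\subseteq\mathcal{F}_\mu$) and define \[ \nu(A)\doteq\sup\{\mu(A\cap G):G\in\mathcal{G}_0\}. \] I would check that $\nu$ is a measure (countable subadditivity is immediate; superadditivity follows by testing a disjoint union against a single $G\in\mathcal{G}_0$ majorizing finitely many chosen unions) and that $(\mu,\nu)$ is compatible (if $\nu(A)>0$, pick $G\in\mathcal{G}_0$ with $\mu(A\cap G)>0$ and set $B=A\cap G$, so $0<\mu(B)<\infty$ and $\nu(B)\geq\mu(B)>0$). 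By hypothesis~(b) there is a density $g$ with $\nu(A)=\int_A g\,d\mu$ for all $A\in\mathcal{F}_\nu$. For every measurable $A\subseteq E$ with $E\in\mathcal{E}$ one has $\nu(A)=\mu(A)$, hence $\int_A g\,d\mu=\mu(A)$ for all such $A$, which forces $g=1$ $\mu$-a.e.\ on each $E\in\mathcal{E}$. I would then show that $A^\ast\doteq\{g>0\}$ is a $\mu$-supremum of $\mathcal{E}$: property~(a) of Definition~\ref{def: mu-sup} holds because $E\setminus A^\ast\subseteq E\cap\{g=0\}\in\mathcal{N}_\mu$; for property~(b), if $B\cap E\in\mathcal{N}_\mu$ for every $E\in\mathcal{E}$, then $\mu(B\cap G)=0$ for all $G\in\mathcal{G}_0$, whence $\nu(B)=0$ and $\int_B g\,d\mu=0$, so $B\cap A^\ast\in\mathcal{N}_\mu$. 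Thus every $\mathcal{E}\subseteq\mathcal{F}_\mu$ admits a $\mu$-supremum and $\mu$ is weakly localizable.

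\textbf{Main obstacle.} I expect the crux to be the treatment of the $\nu$-null remainder $D$ in $(a)\Rightarrow(b)$: because $\mu$ is only weakly localizable, $g$ could a priori be positive on a $\mu$-infinite set invisible to $\nu$, and the usual semifinite exhaustion is unavailable. The resolution is to exploit that every member of $\mathcal{H}_{\int d\mu\leq\nu}$ is supported on a set of finite $\mu$-measure, so property~(b) of the envelope with the test function $u\equiv 0$ transfers the vanishing from the family to $g$. A secondary point requiring care is the verification of $\sigma$-additivity of the test measure $\nu$ in $(b)\Rightarrow(a)$.
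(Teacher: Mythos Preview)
Your proof is correct and follows the same overall architecture as the paper: for $(a)\Rightarrow(b)$ you build the envelope $g=g_{\mathcal{H}}$ from Proposition~\ref{prop: upper envelope} and identify it with the local maximizer on each test set, and for $(b)\Rightarrow(a)$ you construct the same auxiliary measure $\nu(M)=\sup\{\mu(M\cap G):G\in\mathcal{G}_0\}$ and read off a $\mu$-supremum from the resulting density.

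The differences are minor but worth noting. In $(a)\Rightarrow(b)$ the paper treats an arbitrary $A\in\mathcal{F}_\nu$ in one stroke: it first shows $\nu(A)=\sup_{h\in\mathcal{H}}\int_A h\,d\mu$ via Proposition~\ref{prop: compatible implies} and Proposition~\ref{Lemma 3.4}, then builds a single $h_A$ for that $A$ and proves $g_{\mathcal{H}}=h_A$ $\mu$-a.e.\ on $A$ using the same maximality/\allowbreak envelope argument you give. Your two-stage route (first $A\in\mathcal{F}_\mu\cap\mathcal{F}_\nu$ via Lemma~\ref{lemma: h_A}, then extend) is equivalent but forces you to deal separately with the $\nu$-null remainder $D$; your use of envelope property~(b) with $u\equiv 0$ to kill $\int_D g\,d\mu$ is a clean extra step that the paper simply sidesteps. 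In $(b)\Rightarrow(a)$ the paper takes $U=\{g=1\}$ rather than your $A^\ast=\{g>0\}$; both choices work once one knows $g=1$ $\mu$-a.e.\ on each $E\in\mathcal{E}$, and your verification of property~(b) of the $\mu$-supremum via $\nu(B)=0\Rightarrow\int_B g\,d\mu=0\Rightarrow\mu(B\cap\{g>0\})=0$ is arguably slightly slicker than the paper's, which restricts to $F\cap U$ and uses $g\equiv 1$ there.
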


\begin{proof}[(a)$\Rightarrow $(b)]

\noindent
Let us assume that $\mu $ is weakly localizable, and that the pair 
$(\mu,\nu)$ is compatible.\\
Let us consider (see section~\ref{sec: local theory}) 
$\mathcal{H}_{{\int}d\mu {\leq}\nu}$ as the set of all non-negative measurable functions $h$ such that having 
$\{h>0\}\in\mathcal{F}_\mu$, 
and for all 
$M \in\bm{\Sigma}$, $\int_M h d\mu \le \nu(M)$.

\noindent
Setting $\mathcal{H}=\mathcal{H}_{{\int}d\mu {\leq}\nu}$, Proposition~\ref{prop: upper envelope} implies the existence of measurable function 
$g_\mathcal{H}:\Omega \rightarrow [0,{\infty}]$ such that\\ 
(i)\ \ $\forall h \in \mathcal{H}$ \ \  $\{h > g_\mathcal{H} \}\in \mathcal{N}_\mu$\\ 
(ii)\ \ if $F\in\bm{\Sigma}$ and $u:\Omega \rightarrow [0,+\infty]$ is a measurable function such that 
$\forall h\in \mathcal{H}$ \ 
$F \cap \{h>u\} \in \mathcal{N}_\mu$, 
then $F \cap \{g_\mathcal{H}>u\} \in \mathcal{N}_\mu$.\\
Let $A\in\mathcal{F}_\nu$. Then, we have that
\begin{align*}
\nu(A) & = \sup_{
		\begin{array}{c}
			\scriptstyle B \subseteq A\\ 
			\scriptstyle B \in \mathcal{F}_\mu
		\end{array}} \nu(B)
\hspace{20pt} \textrm{(by Proposition~\ref{prop: compatible implies})}\\
			& = \sup_{
		\begin{array}{c}
			\scriptstyle B \subseteq A\\ 
			\scriptstyle B \in \mathcal{F}_\mu
		\end{array}} \sup_{h\in\mathcal{H}} \int_B h \ d\mu
\hspace{20pt} \textrm{(by Proposition~\ref{Lemma 3.4})}\\
			& = \sup_{h\in\mathcal{H}} 
			\sup_{
		\begin{array}{c}
			\scriptstyle B \subseteq A\\ 
			\scriptstyle B \in \mathcal{F}_\mu
		\end{array}} \int_B h \ d\mu
		\le \sup_{h\in\mathcal{H}} \int_A h \ d\mu\\
		& \le \nu(A)
		\hspace{20pt} \textrm{(by definition of~$\mathcal{H}$)}
\end{align*}
Hence, there exists a sequence $(h_n)$ in $\mathcal{H}$ such that 
$\displaystyle \int_A h_n \ d\mu \rightarrow \nu(A)$.\\
By Lemma~\ref{lemma:3.1}, we can assume that $h_n\le h_{n+1}$.
Levi's Monotone Convergence Theorem implies that 
$\displaystyle h_A=\lim_n h_n 1_A = \sup_n h_n 1_A$ satisfies 
$\displaystyle \int_A h_A\ d\mu=\nu(A)$.\\
For every $n$, we have that $\{h_n> g_{\mathcal{H}}\}\in\mathcal{N}_\mu$.
So, we obtain
\begin{center}
$
\displaystyle
\{h_A> g_{\mathcal{H}}\}
=
\bigcup_n \{h_n> g_{\mathcal{H}}\}\in\mathcal{N}_\mu
$.
\end{center}
For every $h\in\mathcal{H}$, we have that
\begin{center}
$
\displaystyle
\nu(A)
=
\sup_n\int_A h_n\ d\mu
\le
\sup_n\int_A \big(h_n\vee h\big)\ d\mu
\le
\nu(A)
$
\end{center}
It follows from Levi's Theorem that
\begin{center}
$
\displaystyle
\nu(A)
=
\int_A h_A\ d\mu
\le
\int_A \sup_n\big(h_n\vee h\big)\ d\mu
\le
\nu(A)
$
\end{center}
We conclude that $A\cap \{h> h_A\} \in \mathcal{N}_\mu$.\\
Since $h\in\mathcal{H}$ is arbitrary, we have that 
$A\cap \{g_\mathcal{H}> h_A\} \in \mathcal{N}_\mu$.
We conclude that 
\begin{center}
$
\displaystyle
\nu(A)
=
\int_A h_A \ d\mu
=
\int_A g_\mathcal{H}\ d\mu
$.
\end{center}

As $A \in \mathcal{F}_\nu$ is arbitrary, then the pair $(\mu,\nu)$ satisfies the Radon-Nikodym property.
\end{proof}

\begin{remark}
Notice that, while in Proposition~\ref{Lemma 3.4} we assumed that 
$A\in \mathcal{F}_\mu \cap \mathcal{F}_\nu$, in the foregoing proof we simply assumed $A\in \mathcal{F}_\nu$. Moreover, in Proposition~\ref{Lemma 3.4} $h_A\in\mathcal{H}$, and this is not the case in Theorem~\ref{Thm: Main}.
\end{remark}

\begin{proof}[(b)$\Rightarrow $(a)]
% in Theorem~\ref{Thm: Main}.]
Let us assume that, on a measurable space $(\Omega ,\bm{\Sigma})$, any compatible pair~$(\mu,\nu)$ of measures on~$(\Omega ,\bm{\Sigma})$ satisfies also the Radon-Nikodym property.
We want to show that $(\Omega ,\bm{\Sigma},\mu)$ is weakly localizable.\\
Let us consider a family $\mathcal{E}$ ${\subseteq}$ $\mathcal{F}_\mu$.

\noindent 
We want to prove the existence of a $\mu$-supremum~$U\in\bm{\Sigma}$ of the family $\mathcal{E}$.\\
We denote by $\mathcal{U}_\mathcal{E}$ the family of all finite unions of elements of $\mathcal{E}$.\\
Then, for every $M \in \bm{\Sigma}$, we define 
$\nu(M)= \sup\left\{\mu(M\cap D) : D \in \mathcal{U}_\mathcal{E}\right\}$.\\
As each $M\cap D$ is contained in M, we have that $\nu(M)\le \mu (M)$.\\
If $M, N \in\bm{\Sigma}$ are disjoint,
then it easy to verify that $\nu(M \sqcup N) = \nu(M) + \nu(N)$.
If a sequence $(M_n)$ of measurable sets is such that
$M_n \subseteq M_{n+1}$, 
one can verify that~$\displaystyle \nu(\cup_n M_n) = \lim_n \nu(M_n)$.
So, $\nu$ is a measure on $(\Omega , \bm{\Sigma})$.
%\ \ \ \ [is it an extension of $\mu $ on [214C?]$\mathcal{E}$ ?]
We want to prove that the pair $(\mu,\nu)$ is compatible; so, by hypothesis, the pair $(\mu,\nu)$ would satisfy the Radon-Nikodym property.\\
Let us consider $M \in \bm{\Sigma}$ such that 
$
0<
\nu(M)=\sup\left\{\mu(M\cap D) : D \in \mathcal{U}_\mathcal{E} \right\}
<+\infty
$.\\
Then, there exists $A \in \mathcal{U}_\mathcal{E}$ such that 
$0< \mu(M \cap A) <+\infty$.\\
That same set $A$ is such that 
$0 < \nu(M \cap A) = \mu(M \cap A) < +\infty$.\\
So, the pair $(\mu,\nu)$ satisfies the Radon-Nikodym property.\\
This implies that there exists a density 
$g=g_{\nu,\mu} : \Omega \rightarrow [0,+\infty]$ such that,\\ 
for every $F \in \mathcal{F}_\nu$ , $\nu(F) = \int_F g\ d\mu$.
Now, we want to prove that the set
$\{g = 1\} = U$ is a $\mu$-supremum of $\mathcal{E}$.\\
Let 
$E \in \mathcal{E}$; we define, for each $n \in \mathbb{N}$, 
$\displaystyle E_n = E \cap \left\{ g \le  1-\frac{1}{n}\right\}$.\\
As 
$\mu(E_n) = \nu(E_n) = \int_{E_n} g d\mu
\le 
\left(1-\frac{1}{n}\right) \mu(E_n)$,
it follows that $\mu(E_n)=0$.\\
Since $n$ is arbitrary, 
$E \cap \{ g < 1\} \in \mathcal{N}_\mu$.
Similarly, 
$E \cap \{ g > 1\} \in \mathcal{N}_\mu$.\\
Hence, 
$E \backslash \{g = 1\} 
=
E \cap \big(\{ g < 1\} \sqcup \{ g > 1\}\big) \in\mathcal{N}_\mu$.\\
Finally, let 
$F \in \bm{\Sigma}$ be such that for every 
$E \in \mathcal{E}$, $F \cap E \in \mathcal{N}_\mu$.\\ 
We find 
$0 = \nu(F \cap U) 
= 
\int_{F \cap U} g d\mu 
=
\int_{F \cap U} d\mu 
=
\mu(F \cap U)$.
So, $F\cap U \in \mathcal{N}_\mu$.
\end{proof}

\section*{Acknowledgments}
We would like to express our sincere gratitude to the anonymous referee for the very careful reading of our manuscript and for the constructive feedback. The referee's detailed comments and insightful suggestions have been invaluable in improving the clarity and rigor of this work.
\section*{ORCID}
\noindent Paolo Roselli - \url{https://orcid.org/0000-0003-3834-9358}

\end{document}